\title{Bistability and oscillatory behaviours of cyclic feedback loops}
\author{Jules Guilberteau % <-this % stops a space
\thanks{ 
       Sorbonne Université, CNRS, Université Paris Cité, Inria, Laboratoire Jacques-Louis Lions (LJLL), F-75005 Paris, France. {\tt\small jules.guilberteau@sorbonne-universite.fr}}
}
\date{}
\newtheorem{theorem}{Theorem}
\newtheorem{prop}{Proposition}
\newtheorem{lem}{Lemma}
\newtheorem*{theorem*}{Theorem}
\theoremstyle{definition}
\newtheorem{definition}{Definition}
\theoremstyle{definition}
\newtheorem*{rem}{Remark}
\newtheorem*{ex}{Example}
\def\thmhead@plain#1#2#3{%
  \thmname{#1}\thmnumber{\@ifnotempty{#1}{ }\@upn{#2}}%
  \thmnote{ {\the\thm@notefont#3}}}
\let\thmhead\thmhead@plain
\newcommand{\R}{\mathbb{R}}
\begin{document}
\maketitle

\begin{abstract}
In this paper, we study the stability of an Ordinary Differential Equation (ODE) usually referred to as Cyclic Feedback Loop, which typically models a biological network of $d$ molecules where each molecule regulates its successor in a cycle ($A_1\rightarrow A_2\rightarrow \ldots  \rightarrow A_{d-1} \rightarrow  A_d \rightarrow  A_1$). Regulations, which can be either positive or negative, are modelled by increasing or decreasing functions. We make a complete analysis of this model for a wide range of functions (including affine and Hill functions) by determining the parameters for which bistability and oscillatory behaviours arise. 
These results encompass previous theoretical studies of gene regulatory networks, which are particular cases of this model. 
%by determining the parameters for which it is monostable or bistable, and by we provide sufficient conditions for the existence of periodic solutions, and determine their stability for some hypotheses. 

%Several dynamical patterns are observed: multiple steady
%states, periodic and aperiodic oscillations corresponding to limit cycles and
%heteroclinic cycles, respectively

\end{abstract}

{\bf Keywords :} Gene regulatory network, Repressilator, Toggle switch, Stability analysis, Multistability, Periodic orbit.

\section{Introduction}

We aim to characterise the stability of the ODE system
\begin{align}
\begin{cases}
\dot x_1=\alpha_1 f_1(x_d)-x_1\\
\dot x_2=\alpha_2 f_2(x_1)-x_2\\
\vdots\\
\dot x_d=\alpha_d f_d(x_{d-1})-x_d
\end{cases}  
\label{ODE dD}
\end{align}
where  $f_1,..., f_d \in \mathcal{C}^1(\R_+, \R_+)$ (with $\R_+=[0, +\infty)$) are non-negative functions, at least one of them is bounded, and $\alpha_1,..., \alpha_d$ are positive parameters.  Throughout this paper,  we use the convention $x_0=x_d$, which allows us to write \eqref{ODE dD} under the compacted form 

\[\quad \forall i\in \{1,..., d\}, \quad \dot x_i=\alpha_if_i(x_{i-1})-x_i.\]

\begin{figure}
\centering
\includegraphics[scale=0.2]{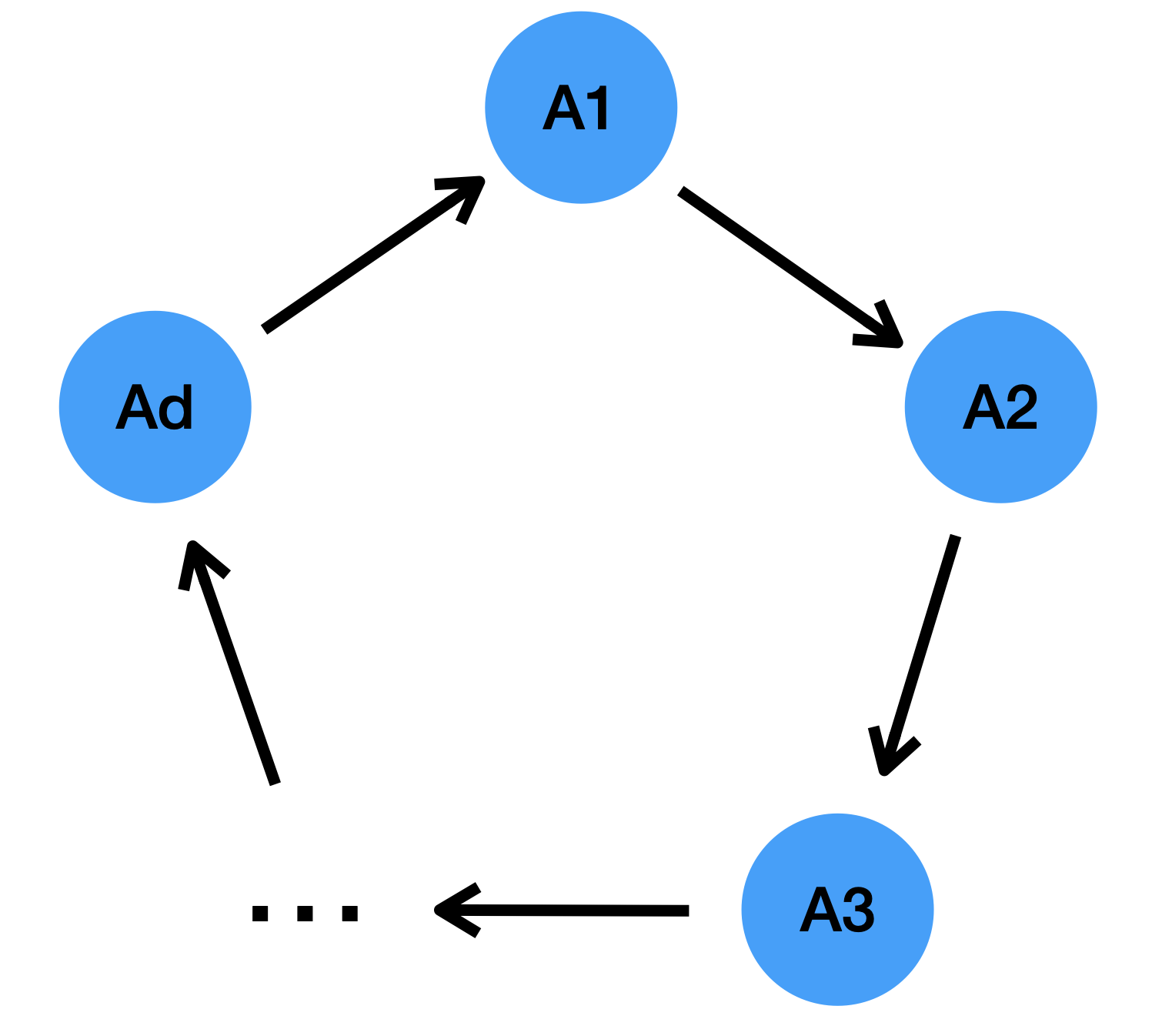}
\caption{Schematic representation of the \textit{cyclic feedback loop} \eqref{ODE dD}: The blue circles represent the molecules of the network, and the arrows between them regulation, which can be positive ($f_i$ increasing) or negative ($f_i$ decreasing).}
\label{figure}
\end{figure}

%Moreover, we assume that an  functions $f_i$ is decreasing. 

%Study simplified equation which are usually blocks of more complex models 
%Because of the complexity of most 
%This system, 

%This model, which is part of the family of biological switches, aims to describe  interactions of  molecules involved in gene regulation \cite{jacob1961genetic}. 

This model is a generalisation of a gene regulatory network initially proposed by Goodwin \cite{goodwin1963temporal, goodwin1965oscillatory}, usually referred to as \textit{Cyclic feedback loop}, which represents interactions between genes, mRNAs, enzymes and proteins called repressors which have the ability to inhibit the expression of some genes. In system  \eqref{ODE dD}, $x_1, ..., x_d$ represent the concentration of each of the molecules involved in the network (denoted $A_1$, ..., $A_n$), and $f_1,..., f_d$ the regulation between them. The system is assumed to be cyclic ($A_i$ regulates $A_{i+1}$ and only $A_{i+1}$, as illustrated by Figure \ref{figure}), and each regulation can be positive ($f_i$ increasing) or negative ($f_i$ decreasing). %Moreover, we do not consider self-regulation, 
 %The functions $f_i$ denotes the interactions between them. We assume that these interaction are either activation  ($f_i$ increasing) or inhibition ($f_i$ decreasing), and that they 
The relevance of these cyclic models has been established in \cite{angeli2004detection} and \cite{gardner2000construction} where some theoretical predictions (oscillatory phenomena and bistability) have been observed experimentally. This highlights the importance of understanding the dynamical behaviour of such systems \textit{i.e.} determining the number of stable equilibrium points and their basins of attraction, as well as the possible existence of periodic solutions or chaotic behaviours. 

System \eqref{ODE dD} has been, for some specific choices of $f_i$,  the subject of several theoretical studies \cite{banks1978stability, muller2006generalized, selgrade1980asymptotic, smith1987oscillations, smith1988systems, widder2007dynamic}. In these papers, restrictions on the functions $f_i$ were notably imposed by the necessity to compute the value of the equilibrium points of the system, which is intricate when more than two functions are not affine, and are not identical. In the present paper, we follow a method initiated by Cherry and Adler \cite{cherry2000make} allowing to avoid explicitly computing the equilibrium points. In the two dimensional case, which writes

%The purpose of this article is to generalise these results, by making weaker hypothesis on the functions $f_i$. {\color{red} detail}
%In all these paper, 
%Indeed, the need to determine the equilibrium points of system \eqref{ODE dD} has led to make strong assumptions on the functions $f_i$, by assuming that most of them are linear and that the non-linear one are identical, or by imposing a particular form. {\color{red} add ref.}. 

%In order to avoid restrictive hypotheses on the functions $f_i$, we adapt here the results developed in \cite{guilberteau2021monostability} for the two-dimensional case, which writes

\begin{align}
\begin{cases}
\dot x_1=\alpha_1f_1(x_2)-x_1\\
\dot x_2=\alpha_2f_2(x_1)-x_2
\end{cases}, 
\label{ODE 2D}
\end{align}the following results have already been established:
%In this article, following the idea initiated by Cherry and Adler, we do not compute the value of an equilibrium point of \eqref{ODE dD}, as it was done in the papers previously mentioned, but we determine, for given functions $f_1, ..., f_d$, the set of parameters for which  which the system is monostable or multistable, and provide a condition on the function $f_1,...f_d$ ensuring that the system cannot be more than bistable. The result obtained in the two-dimensional case  can be summarised as follows:

\begin{enumerate}
\item If $f_1$ and $f_2$ are both increasing or both decreasing, and if 
\begin{align}
\underset{x>0}{\sup}\left\lvert \frac{xf_1'(x)}{f_1(x)}\right\rvert \underset{x>0}{\sup}\left\lvert \frac{xf_2'(x)}{f_2(x)}\right\rvert>1,
\label{condition Cherry Adler}
\end{align}
then there exist values of $(\alpha_1, \alpha_2)\in {\R_+^*}^2$ such that system \eqref{ODE 2D} is multistable \textit{i.e.} there exist at least two equilibrium points which are asymptotically stable \cite{cherry2000make}. 
\item If $f_1$ and $f_2$ are both increasing or both decreasing, and if 
\begin{equation}
\frac{1}{\sqrt{\lvert f_1'\rvert}} \quad \text{and} \quad \frac{1}{\sqrt{\lvert f_2'\rvert}} \quad \text{are strictly convex \footnotemark }, 
\label{condtion GuiPouPouDu}
\end{equation}

\footnotetext{ It is in fact sufficient to assume that these two functions are convex, and that at least one of them is strictly convex. }

then system  \eqref{ODE 2D} is either monostable or bistable, \textit{i.e.} there exist exactly one or exactly two equilibrium points which are asymptotically stable. Moreover, it  is possible to determine, up to a set of measure zero, the set of parameters for $(\alpha_1, \alpha_2)$ for which the system is monostable and the set of parameters for which it is bistable \cite{guilberteau2021monostability}. 
\item All the solutions to system \eqref{ODE 2D} converge (even without assuming any monotonicity). 
\end{enumerate}
This last result is a direct application of the Poincaré-Bendixson theorem and the Dulac-Bendixson theorem \cite{perko2013differential}. 

A natural question at this stage is which of these properties generalise to higher dimensions ($d\geq 3$). A major result was achieved by Mallet-Paret and Smith in \cite{mallet1990poincare}, showing that the Poincaré-Bendixson theorem can be adapted to monotone feedback systems, including \eqref{ODE dD}.  Furthermore, a series of results of Hirsch \cite{hirsch1982systems, hirsch1984dynamical, hirsch1983differential} generalised and summarised in \cite{smith1988systems}, has shown that in the case where an even number of functions $f_i$ is decreasing, the solutions of \eqref{ODE dD} converge to an equilibrium point for almost every initial condition. It is well-known that this property does not hold when the number of decreasing functions is odd (in which case the system is often called `repressilator'), and that there can there exist stable orbits \cite{buse2010dynamical, muller2006generalized}. 

In this paper, we prove the following result, which is a generalisation of our previous paper on the two-dimensional case \cite{guilberteau2021monostability}: 

%Following the approach initiated by Cherry and Adler in \cite{cherry2000make} and improved in the 2D case in \cite{guilberteau2021monostability} allowing to 

%As mentioned in \cite{smith1987oscillations} the question of the existence of cyclic solutions in the case  where $n$ remained open, even if the convergence to stable equilibria were highlighted (with the specific choice of functions $f_i$ and parameters $\alpha_i$ treated in this paper). 

\begin{theorem}
Let us assume that $f_1,..., f_d\in \mathcal{C}^3(\R_+, \R_+^*)$ are monotonous, non-negative, and that (at least) one of them is bounded. Moreover, let us assume that the functions $\frac{1}{\sqrt{\lvert f'_1 \rvert }}, ..., \frac{1}{\sqrt{\lvert f'_d \rvert }}$ are defined and convex, and that (at least) one of them is strictly convex. Lastly, let us denote by $n$ the number of these functions which are decreasing, and let
\[D:=\prod\limits_{k=1}^d{\underset{x>0}{\sup}\left\lvert \frac{xf_k'(x)}{f_k(x)}\right\rvert}\in (0, +\infty] \]

\begin{enumerate}
\item If $n$ is even, then
\begin{enumerate}[(i)]
\item If $D<1$, then for any $\alpha\in \left(\R_+^*\right)^d$ system \eqref{ODE dD} has a unique equilibrium point which is globally asymptotically stable. 
\item If $D>1$, then there exists a non-empty set $A_{\textrm{bis}}\subset \left(\R_+^*\right)^d$  such that 
\begin{itemize}
\item If $\alpha \in A_{\textrm{bis}}$, then system \eqref{ODE dD} has exactly two asymptotically stable equilibria, and the union of their basins of attraction is a dense open subset of  $\R_+^d$, with a complement of Lebesgue measure zero. 
\item If $\alpha \in \overline{A_{\textrm{bis}}}^C$, then system \eqref{ODE dD} has a unique equilibrium point which is globally asymptotically stable. 
\end{itemize}
% $\alpha\in \left(\R_+^*\right)^d$ such that system \eqref{ODE dD} is bistable.
\item If $d\geq 5$ and  $D>\frac{1}{\cos \left( \frac{2 \pi}{d}\right)^d}$, then there exists a non-empty set $A_{\textrm{per}}\subset A_{\textrm{bis}}$ such that if $\alpha\in A_{\textrm{per}}$, then system \eqref{ODE dD} has periodic solutions.
%Moreover, all the solutions either converges to an equilibrium point, or to a periodic orbit. 
\end{enumerate}
\item If $n$ is odd, then system \eqref{ODE dD} has a unique equilibrium point. Moreover, if $d\geq 3$, then 
\begin{enumerate}[(i)]
\item If $D<\frac{1}{\cos \left(\frac{\pi}{d}\right)^d}$, then for any  $\alpha\in \left(\R_+^*\right)^d$,  this equilibrium point is asymptotically stable, and all the solutions of \eqref{ODE dD} either converge to this point or to a periodic orbit. 
\item If $D>\frac{1}{\cos \left(\frac{\pi}{d}\right)^d}$, then there exists $A_{\textrm{unst}}\subset \left(\R_+^* \right)^d$ a non-empty set such that
\begin{itemize}
\item If $\alpha \in A_{\textrm{unst}}$, then this equilibrium point is asymptotically unstable, and there exists a finite number of periodic solutions, among which at least one is asymptotically stable. Moreover, the set of initial conditions for which the solution converges to a periodic solution is a dense open subset of $\R_+^d$, and its complement, which is the set of initial conditions for which the solution converges to the equilibrium point, has Lebesgue measure zero. 
\item If $\alpha \in \overline{A_{\textrm{unst}}}^C$, this equilibrium point is asymptotically stable, and all the solutions of \eqref{ODE dD} either converge to this point or to a periodic orbit. 
\end{itemize}

%$\alpha\in \left(\R_+^*\right)^d$ such that the equilibrium point is unstable, that implies t solutions of \eqref{ODE dD} which does not converge. 
\end{enumerate}
\end{enumerate}

\label{Theorem intro}
\end{theorem}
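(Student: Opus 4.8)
The plan is to reduce the analysis of the equilibria of \eqref{ODE dD} to a scalar fixed-point problem, to carry out a spectral analysis of the associated cyclic Jacobian, and then to invoke the monotone-systems results cited above to pass from local to global statements. Write $g_k(x):=\frac{xf_k'(x)}{f_k(x)}$. An equilibrium satisfies $x_i=\alpha_if_i(x_{i-1})$ for all $i$, so $x_1$ is a fixed point of the composite return map $\Phi:=(\alpha_1f_1)\circ(\alpha_df_d)\circ\cdots\circ(\alpha_2f_2)$, which is bounded since at least one $f_i$ is. A direct computation gives that, at any equilibrium $(x_1^*,\dots,x_d^*)$,
\[ \Phi'(x_1^*)=\prod_{i=1}^d \alpha_i f_i'(x_{i-1}^*)=\prod_{i=1}^d g_i(x_{i-1}^*), \]
the second equality following from $\alpha_i=x_i^*/f_i(x_{i-1}^*)$ and a cyclic cancellation of the factors $x_i^*$. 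Set $P:=\Phi'(x_1^*)$; its sign is $(-1)^n$. The essential observation is that, since for any target point in $(\R_+^*)^d$ one may choose $\alpha$ realising it as an equilibrium, and since the arguments $x_{i-1}^*$ then range independently over $\R_+^*$, the supremum of $|P|$ over all equilibria and all $\alpha$ equals $\prod_i\sup_{x>0}|g_i(x)|=D$. This identity is what links the threshold quantity $D$ to stability.

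Next I would compute the spectrum of the Jacobian at an equilibrium, which is a cyclic weighted-shift matrix with $-1$ on the diagonal and the factors $\alpha_if_i'(x_{i-1}^*)$ on the cyclic subdiagonal. Its characteristic polynomial is $\lambda^d=P$ shifted by $-1$, so its eigenvalues are $\lambda_k=-1+|P|^{1/d}\,\zeta_k$, where $\zeta_k$ runs over the $d$-th roots of $\mathrm{sgn}(P)$. Hence $\max_k\mathrm{Re}\,\lambda_k=-1+|P|^{1/d}$ when $n$ is even (nearest root $\zeta=1$) and $-1+|P|^{1/d}\cos(\pi/d)$ when $n$ is odd (nearest roots $e^{\pm i\pi/d}$); in the even case the first complex pair able to cross the imaginary axis corresponds to $\cos(2\pi/d)$, which is positive exactly when $d\ge5$. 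This yields the three thresholds $1$, $1/\cos(\pi/d)^d$ and $1/\cos(2\pi/d)^d$ appearing in the statement, the first (real) crossing producing the creation of new equilibria and the complex crossing producing oscillation.

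It then remains to count equilibria and to globalise. When $n$ is odd, $\Phi$ is decreasing, hence has a unique fixed point and the system a unique equilibrium; comparing $D$ with $1/\cos(\pi/d)^d$ via the spectral computation determines its stability, and the Poincaré--Bendixson theorem for monotone cyclic feedback systems of Mallet-Paret and Smith \cite{mallet1990poincare} forces every (bounded) trajectory to converge to the equilibrium or to a periodic orbit; when the equilibrium is unstable this produces a stable periodic orbit. When $n$ is even, $\Phi$ is increasing, and here I would transport the two-dimensional argument of \cite{guilberteau2021monostability}: the convexity of the functions $1/\sqrt{|f_i'|}$ controls the shape of $\Phi$ near the diagonal and limits its crossings, so that there are at most two asymptotically stable equilibria. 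Since $\sup|P|=D$, the case $D<1$ forces $\Phi'<1$ at every fixed point of an increasing bounded map, hence a single globally stable equilibrium, whereas $D>1$ yields an $\alpha$ with an unstable equilibrium ($P>1$) flanked by two stable ones, defining the bistable region $A_{\textrm{bis}}$. The generic-convergence theory of Hirsch \cite{hirsch1984dynamical, smith1988systems} for the (cooperative, after a sign change) even case then gives the dense open basins with null complement.

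The main obstacle I expect is precisely the even-dimensional counting step and its consequences: extending the convexity-based control of $\Phi$ from $d=2$ to general $d$ so as to bound the number of stable equilibria by two, and deducing the topological and measure-theoretic description of $A_{\textrm{bis}}$ (and, in the oscillatory regimes, the finiteness of the set of periodic orbits together with the density of the basin of the stable ones). Showing that $A_{\textrm{bis}}$, $A_{\textrm{per}}$ and $A_{\textrm{unst}}$ are nonempty is comparatively soft once the identity $\sup|P|=D$ is in hand; the delicate points are to verify that the complements $\overline{A_{\textrm{bis}}}^C$ and $\overline{A_{\textrm{unst}}}^C$ lie entirely in the globally stable regime and to control the boundary behaviour, which will require a careful continuity and transversality argument in the parameter $\alpha$.
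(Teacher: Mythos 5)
Your skeleton largely coincides with the paper's: the reduction to the composite map ($\tilde f=\alpha_d f_d\circ\cdots\circ\alpha_1 f_1$ in the paper, your $\Phi$ is the same up to cyclic relabelling), the identity $\Phi'(\bar x)=\prod_i\alpha_i f_i'(\bar x_{i-1})=\prod_i g_i(\bar x_{i-1})$, the spectrum $\{\lvert p\rvert^{1/d}\zeta-1\}$ with $\zeta$ ranging over $d$-th roots of $\mathrm{sgn}(p)$ and the three thresholds $1$, $1/\cos(\pi/d)^d$, $1/\cos(2\pi/d)^d$, the realisation of any point of $(\R_+^*)^d$ as an equilibrium by choosing $\alpha=\Gamma(\bar x)$ together with $\sup\lvert P\rvert=D$, Mallet-Paret--Smith for the odd case (the paper uses their main theorem plus their Theorem 4.3, which is what actually delivers the finite number of periodic orbits, at least one stable, and the dense open basin — plain Poincaré--Bendixson does not), and Smith's irreducible type K monotone theory for the even case. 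One ``obstacle'' you flag is in fact already resolved in the cited prior work: $\gamma^{1/2}$-convexity (convexity of $1/\sqrt{\lvert f'\rvert}$) is closed under composition and positive scaling, so $\tilde f$ is strictly $\gamma^{1/2}$-convex and has at most three fixed points, with $\tilde f'<1$ at all of them or exactly one with $\tilde f'>1$; no new $d$-dimensional convexity argument is needed. Likewise the boundary issue you anticipate needs no transversality: the paper simply uses $\Gamma(\overline{E_{\mathrm{bis}}})\subset\overline{\Gamma(E_{\mathrm{bis}})}$ and continuity of $\Gamma$.

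The genuine gap is part 1~\emph{(iii)}, which is precisely the novel content of the paper. Your proposed mechanism — ``the complex crossing producing oscillation'' — is not a proof in the even (monotone) case: a Hopf-type argument would require verifying nondegeneracy conditions for general $f_i$, and since by Hirsch--Smith the set of initial conditions attracted to periodic orbits has measure zero, any such orbit is unstable, so the bifurcation analysis would be delicate; the paper does not attempt it. Instead it argues topologically: when $p_{\bar x}^\alpha>1/\cos(2\pi/d)^d$ and $p_{\bar x}^\alpha\notin S_d$ (a finite exceptional set excluding purely imaginary eigenvalues — absent from your proposal, yet needed for hyperbolicity and the stable manifold theorem), the Jacobian at the unstable equilibrium has at most $d-3$ eigenvalues with negative real part, so $\dim B_{\bar x}\le d-3$. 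The key lemma (Lemma \ref{lem dimension unstable}) then shows that if \emph{all} solutions converged, $\R_+^d=B_{\bar x}\cup B_{\bar y}\cup B_{\bar z}$ with $B_{\bar y},B_{\bar z}$ open would force $B_{\bar x}$ to separate the connected set $\R_+^d$, hence $\dim B_{\bar x}=d-1$ by dimension theory (Hurewicz--Wallman); the contradiction yields non-convergent solutions, which by Mallet-Paret--Smith must approach periodic orbits. Without this separation/dimension argument (or a fully worked-out alternative), your proposal does not establish the existence of periodic solutions for even $n$, $d\ge 5$, $D>1/\cos(2\pi/d)^d$.
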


In each of these cases, the sets $A_{\textrm{bis}}$, $A_{\textrm{per}}$, $A_{\textrm{unst}}$ can be explicitly expressed, as we will show in Sections \ref{Even number of decreasing functions} and \ref{Odd number of decreasing functions}. 
Moreover, note that any Hill function (even shifted), \textit{i.e.} function of the form $x \mapsto \frac{1+\lambda x^r}{1+x^r}$, with $\lambda \in \R_+ \backslash \{1\}$ and $r\geq 1$, as well as linear functions, satisfy the convexity hypothesis of this theorem, which means that this result encompasses the other theoretical studies mentioned above \cite{banks1978stability, muller2006generalized, selgrade1980asymptotic, smith1987oscillations, smith1988systems, widder2007dynamic}. 

It is worth noting that, when $n$ is even, the set of initial conditions for which the solution converges to a periodic orbit has Lebesgue measure zero, and can therefore hardly be reached numerically. Nevertheless, we highlight this result since, up to our knowledge, the question of the existence of such periodic solution remained open, as mentioned in \cite{smith1988systems}. Moreover, the proof of this result, which uses the stable manifold theorem, seems to us non-trivial and worthwhile. We also note that condition 1 \textit{(iii)} is sufficient, but perhaps not necessary for the existence of periodic solution: in particular, this question for  $d\in \{3,4\}$ remains open. Lastly, we do not know if periodic solutions do exist under the hypotheses of 2 \textit{(i)} and in the second point of 2 \textit{(ii)}, but our study does not rule out this possibility.

This article, entirely dedicated to the proof of Theorem \ref{Theorem intro}, is organised as follows: after characterising the equilibrium points of system \eqref{ODE dD} in Section \ref{Characterisation}, we prove the theorem when $n$ is even (Section \ref{Even number of decreasing functions}), before dealing with the case where $n$ is odd, which is simpler, in the last section. 

\section{Characterisation of fixed points}
\label{Characterisation}
Throughout this note, we assume that $f_1, ..., f_d \in \mathcal{C}^3(\R_+, \R_+^*)$ are non-negative, monotonous and that at least one of these functions is bounded.  Note that, according to the regularity of these functions, the Cauchy-Lipschitz theorem ensures the local existence and the uniqueness of the solution of this equation for any initial condition $x_0\in \R^d_+$. Furthermore, the positivity of the functions  guarantees that the solutions remain in $\R^d_+$. Lastly, using the fact that one of them is bounded, we easily prove the global existence of solutions and the existence of a compact attractor set,\textit{i.e.} the existence of a compact set $K\subset \R^d_+$ such that for any initial condition $x_0\in\R_+^d$, there exists $T\geq 0$ such that $x(t)\in K$ for all $t\geq T$. 

We now make the additional assumption that $f_1,..., f_d$ are $\gamma^{1/2}-$convex, and that at least one of them is strictly $\gamma^{1/2}-$convex,  \textit{i.e.} they satisfy the following definition:

\begin{definition}[(\textbf{$\mathbf{\gamma^{1/2}-}$convexity})]
Let $f \in \mathcal{C}^3\left(\R_+,  \R_+\right)$ a non-negative and monotonous function. We say that $f$ is (strictly) $\gamma^{1/2}-$ convex if $\lvert f'\rvert >0$ and $\frac{1}{\sqrt{\lvert f' \rvert }}$ is (strictly) convex. 
\end{definition}

%We denote by $n$ the number of functions $f_i$ which are decreasing, and we assume in this section that $n$ is even. We will show in the next section that the case where $n$ is odd is simpler, and can be treated with a similar method.  

%First, let us
Note that this definition can be related to the definition of the Schwartzian derivative of $f$ defined by $S(f)=\frac{f'''}{f'}-\frac{3}{2}\left(\frac{f''}{f'}\right)^2$ by noting that 
$ (\frac{1}{\sqrt{\lvert f' \rvert }})''=-\frac{1}{2}\frac{1}{\sqrt{\lvert f' \rvert }} S(f)$. 

We relate here the equilibrium points of system \eqref{ODE dD} to the fixed points of an auxiliary function  $\tilde{f}$. The $\gamma^{1/2}-$convexity of the functions $f_i$ ensures that the number of equilibrium points cannot exceed three, and provides a criterion which characterises this exact number of equilibria. A similar approach was used, (with the Schwartzian derivative) for a particular case of this system in \cite{muller2006generalized}. 

We start by recalling some key properties of the $\gamma^{1/2}-$convexity, which have been established in \cite{guilberteau2021monostability}. 

\begin{prop}
Let $f, g$ be two $\gamma^{1/2}-$convex functions, $c>0$.
\begin{enumerate}[(i)]
\item $f\circ g$ and $cf$ are  $\gamma^{1/2}-$convex.
 Moreover, if $f$ or $g$ is strictly $\gamma^{1/2}-$convex, then $f\circ g$ is strictly $\gamma^{1/2}-$convex. 
\end{enumerate}
Let us now assume that $f$ is strictly $\gamma^{1/2}-$convex. Then:
\begin{enumerate}[(i)]
\setcounter{enumi}{1}
\item $cf$ is strictly $\gamma^{1/2}-$convex.
\item $f$ has at most three fixed points. 
\item If all the fixed points $x$ of $f$ satisfy $f'(x)<1$, then $f$ has a unique fixed point.
\item If there exists a fixed point of $f$ (denoted $x$) such that $f'(x)>1$, then $f$ has exactly three fixed points and the other two fixed points (denoted $ y$, $ z$) satisfy $f'( y)<1$ and $f'(z)<1$. 
\end{enumerate}
\label{prop gamma-half}
\end{prop}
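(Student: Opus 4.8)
The plan is to treat the algebraic closure properties (i)--(ii) via the Schwarzian derivative, and the counting statements (iii)--(v) by combining the mean value theorem with the single structural fact that $\varphi_f:=1/\sqrt{|f'|}$ is strictly convex. Throughout I use the pointwise identity recalled above, $\varphi_f''=-\tfrac12\varphi_f\,S(f)$; since $\varphi_f>0$, this says that $f$ is $\gamma^{1/2}$-convex iff $S(f)\le 0$ and strictly iff $S(f)<0$. I would also reduce (iii)--(v) to the case where $f$ is increasing: if $f$ is decreasing then $x\mapsto f(x)-x$ is strictly decreasing, so $f$ has exactly one fixed point $x^*$ with $f'(x^*)<0<1$, and all three statements hold simultaneously.

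For (i)--(ii) I would invoke the composition law of the Schwarzian, $S(f\circ g)=\big(S(f)\circ g\big)(g')^2+S(g)$, together with its invariance under post-composition by affine maps, $S(cf)=S(f)$. If $S(f)\le0$ and $S(g)\le0$ then $S(f\circ g)\le0$, so $f\circ g$ is $\gamma^{1/2}$-convex, and $S(cf)=S(f)\le0$ gives the same for $cf$; this is the non-strict part. For strictness, note that $|g'|>0$ forces $(g')^2>0$, so if either $S(f)<0$ (the first term is then $<0$) or $S(g)<0$ (the second term is then $<0$) we get $S(f\circ g)<0$, proving $f\circ g$ strictly $\gamma^{1/2}$-convex; likewise $S(cf)=S(f)<0$ gives (ii).

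For the counting, assume $f$ increasing, so $\varphi:=1/\sqrt{f'}>0$ is strictly convex and $f'(x)=1\iff\varphi(x)=1$. The two facts I need are: a strictly convex function meets any level $\{\varphi=a\}$ in at most two points; and if it meets $\{\varphi=1\}$ in exactly two points $c_1<c_2$, then $\varphi<1$ on $(c_1,c_2)$ and $\varphi>1$ off $[c_1,c_2]$. If $f$ had fixed points $x_1<\dots<x_k$, the mean value theorem on each $[x_i,x_{i+1}]$ yields $c_i\in(x_i,x_{i+1})$ with $f'(c_i)=\frac{f(x_{i+1})-f(x_i)}{x_{i+1}-x_i}=1$, i.e. $\varphi(c_i)=1$; the $c_i$ are distinct, so $k-1\le2$ and $k\le3$, which is (iii). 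For (v), three fixed points $x_1<x_2<x_3$ produce exactly the two level-$1$ points $c_1<c_2$; since $x_2\in(c_1,c_2)$ we get $\varphi(x_2)<1$, i.e. $f'(x_2)>1$, while $x_1<c_1$ and $x_3>c_2$ give $\varphi>1$, i.e. $f'(x_1),f'(x_3)<1$. Conversely, if some fixed point $x^*$ has $f'(x^*)>1$ then $h:=f-x$ satisfies $h(x^*)=0$, $h'(x^*)>0$; using $h(0)=f(0)>0$ and, when $f$ is bounded (as holds in the application to the auxiliary map), $h\to-\infty$, the intermediate value theorem produces a fixed point on each side of $x^*$, hence exactly three by (iii). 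Finally (iv): if $\ge2$ fixed points existed, pick two consecutive ones; both have $h'<0$, so $h$ is negative just to the right of the first and positive just to the left of the second, forcing an intermediate zero at which $h'\ge0$, i.e. a fixed point with $f'\ge1$, contradicting the hypothesis; since a bounded increasing $f$ with $f(0)>0$ has at least one fixed point, uniqueness follows.

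The main obstacle is the careful sign bookkeeping around the value $f'=1$: all the content lies in locating the level set $\{\varphi=1\}$ relative to the fixed points and in correctly ruling out the tangential case $f'=1$ (a double fixed point), which is precisely what separates the hypotheses of (iv) and (v). The Schwarzian identities render (i)--(ii) essentially mechanical and the decreasing case is immediate; the delicate points are to use strict convexity of $\varphi$ in exactly the two forms stated above, and to justify existence of a fixed point and the $x\to\infty$ behaviour of $h$ from the standing assumption that the relevant map is positive and bounded.
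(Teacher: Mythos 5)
The paper does not actually prove this proposition: it is explicitly \emph{recalled} from the author's earlier work \cite{guilberteau2021monostability}, so there is no internal proof to compare against. Judged on its own merits, your argument is essentially correct and complete, and the Schwarzian route for (i)--(ii) is the natural one (the paper itself points out the identity $(1/\sqrt{|f'|})''=-\tfrac12 (1/\sqrt{|f'|})\, S(f)$). Two remarks. First, your claimed equivalence ``strictly $\gamma^{1/2}$-convex iff $S(f)<0$'' is inexact: strict convexity of $\varphi_f=1/\sqrt{|f'|}$ does not force $\varphi_f''>0$ pointwise (it may vanish at isolated points), so strictness only yields $S(f)\le 0$ with $S(f)$ not identically zero on any nondegenerate interval. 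This does not sink the strictness part of (i), but it needs a one-line repair: since both terms of $S(f\circ g)=\bigl(S(f)\circ g\bigr)(g')^2+S(g)$ are nonpositive, $S(f\circ g)\equiv 0$ on an interval $I$ would force $S(g)\equiv 0$ on $I$ and $S(f)\equiv 0$ on $g(I)$, which is a nondegenerate interval because $|g'|>0$; either conclusion contradicts strict $\gamma^{1/2}$-convexity of the corresponding factor, since $S\equiv 0$ on an interval makes $\varphi$ affine there. Second, you rightly flagged that the existence halves of (iv)--(v) require $f(0)>0$ and boundedness, which the proposition as stated here omits: indeed $f(x)=2x+x^2$ is strictly $\gamma^{1/2}$-convex on $\R_+$ ($\varphi=(2+2x)^{-1/2}$ is strictly convex) with unique fixed point $0$ where $f'(0)=2>1$, so (v) fails literally without these hypotheses. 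They do hold for the only function to which the paper applies the result, namely $\tilde f=\alpha_d f_d\circ\cdots\circ\alpha_1 f_1$, which is bounded with $\tilde f(0)>0$; making that conditioning explicit, as you did, is the right call. The counting arguments via the mean value theorem and the two-point level sets of the strictly convex $\varphi$, as well as the reduction of the decreasing case to a single fixed point, are all sound.
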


\begin{ex}
For any $r\geq 1$, $a,b,c,d\geq 0$ such that $ad-bc\neq 0$, the function  $x\mapsto \frac{ax^r+b}{cx^r+d}$ is $\gamma^{1/2}-$convex. Moreover, if $r>1$, then it is strictly $\gamma^{1/2}-$convex. In particular, affine functions and Hill functions are $\gamma^{1/2}-$convex.\footnote{In the Appendix of \cite{guilberteau2021monostability} we show that many other usual sigmoid functions are strictly $\gamma^{1/2}-$convex.} 

%\begin{itemize}
%\item Any affine or homographic function defined,  non-negative and non-constant on $\R_+$, \textit{i.e.} function of the shape $x\mapsto \frac{ax+b}{cx+d}$ with $a,b,c,d\geq 0$, $ad-ac\neq 0$, is $\gamma^{1/2}-$convex. 
%\item For all $\lambda\in \R_+ \backslash \{1\}$,  $x_0>0$, $r\geq 1$, the shifted Hill function $x\mapsto \frac{1+\lambda \left(\frac{x}{x_0}\right)^r}{1+\left(\frac{x}{x_0}\right)^r}$ is than most of the usual sigmoid functions . Moreover, if $r>1$, then it is strictly $\gamma^{1/2}-$convex.
%\end{itemize}

\end{ex}

By definition, the point $\bar x= (\bar x_1, ..., \bar x_d)\in \R_+^d$ is an equilibrium  point of \eqref{ODE dD} if and only if
\begin{align*}
\begin{cases}
\bar x_1=\alpha_1f_1(\bar x_d)\\
\bar x_2=\alpha_2f_2(\bar x_1)\\
\vdots\\
\bar x_d = \alpha_d f_d(\bar x_{d-1})
\end{cases}
\Longleftrightarrow \quad 
\begin{cases}
\bar x_1=\alpha_1f_1(\bar x_d)\\
\bar x_2=\alpha_2 f_2(\bar x_1)\\
\vdots\\
\bar x_{d-1}=\alpha_{d-1} f_{d-1}(\bar x_{d-2})\\
\bar x_d=\alpha_d f_d \circ \alpha_{d-1}f_{d-1}\circ... \circ \alpha_1 f_1(\bar x_d)\\
\end{cases}. 
\end{align*} 

Thus, the number of equilibrium points of $\eqref{ODE dD}$ is equal to the number of fixed points of $\tilde{f}:=\alpha_d f_d \circ ... \circ \alpha_1 f_1$. Since $\tilde{f}(0)>0$ and $\tilde{f}$ is bounded, it proves in particular that \eqref{ODE dD} has at least one equilibrium point. 
Note that, according to the first two properties of Proposition \ref{prop gamma-half}, $\tilde{f}$ is strictly $\gamma^{1/2}-$ convex, and a direct computation shows that 
\[\tilde{f}'(\bar x_d)=\prod\limits_{i=1}^d{\alpha_i f_i'(\bar x_{i-1})}. \]

We can thus apply the fourth and the fifth properties of Proposition \ref{prop gamma-half} to $\tilde{f}$ to derive the following lemma:

\begin{lem}
Let us denote, for all $\alpha, x \in \R_+^d$,  $p_{x}^\alpha=\prod\limits_{i=1}^d{\alpha_i f_i'( x_{i-1})}$. 
\begin{itemize}
\item If all the equilibrium points of system \eqref{ODE dD} satisfy $p_{\bar x}^\alpha<1$, then this system has a unique equilibrium point. 
\item If there exists an equilibrium point of system \eqref{ODE dD} (denoted $\bar x$) such that $p_{\bar x}^\alpha>1$, then this system has exactly three equilibrium points, and the other two points (denoted $\bar y,\bar z$) satisfy $p_{\bar y}^\alpha<1$, $p_{\bar z}^\alpha<1$. 
\end{itemize}
\label{lem characterisation fixed points}
\end{lem}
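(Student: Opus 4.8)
The plan is to observe that this lemma is nothing more than a translation, through the bijection already established above, of properties (iv) and (v) of Proposition \ref{prop gamma-half} applied to the auxiliary map $\tilde f = \alpha_d f_d \circ \cdots \circ \alpha_1 f_1$. The two ingredients I need are in place: first, the equilibrium points of \eqref{ODE dD} are in one-to-one correspondence with the fixed points of $\tilde f$, where to a fixed point $\bar x_d$ one associates the full equilibrium $\bar x=(\bar x_1,\ldots,\bar x_d)$ via the recursion $\bar x_i=\alpha_i f_i(\bar x_{i-1})$; and second, $\tilde f$ is strictly $\gamma^{1/2}$-convex by items (i)--(ii) of the Proposition.

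First I would record the identity that links the two pictures. By the chain rule, the derivative of $\tilde f$ at a fixed point $\bar x_d$ is $\tilde f'(\bar x_d)=\prod_{i=1}^d \alpha_i f_i'(\bar x_{i-1})$, where the arguments $\bar x_{i-1}$ are exactly the successive iterates produced inside the composition, namely the coordinates of the associated equilibrium (with the convention $\bar x_0=\bar x_d$). Comparing with the definition $p_x^\alpha=\prod_{i=1}^d \alpha_i f_i'(x_{i-1})$, this gives $\tilde f'(\bar x_d)=p_{\bar x}^\alpha$. This is the only point requiring verification, and it is immediate from the expression for $\tilde f'$ displayed just above the lemma.

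With this identity, both bullets follow at once. For the first, if every equilibrium $\bar x$ satisfies $p_{\bar x}^\alpha<1$, then every fixed point of $\tilde f$ has derivative strictly below $1$, so property (iv) gives that $\tilde f$ has a unique fixed point, hence \eqref{ODE dD} has a unique equilibrium. For the second, if some equilibrium $\bar x$ satisfies $p_{\bar x}^\alpha>1$, then the corresponding fixed point of $\tilde f$ has derivative strictly above $1$, so property (v) gives that $\tilde f$ has exactly three fixed points, the other two carrying derivatives below $1$; transporting these back through the bijection yields exactly three equilibria $\bar x,\bar y,\bar z$ with $p_{\bar y}^\alpha<1$ and $p_{\bar z}^\alpha<1$. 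Since the correspondence is a bijection, counting fixed points of $\tilde f$ counts equilibria of the system, which closes the argument.

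I expect no real obstacle here: the substantive work lives in Proposition \ref{prop gamma-half}, which is assumed, and the lemma is obtained by carrying the single derivative identity $\tilde f'(\bar x_d)=p_{\bar x}^\alpha$ through the bijection. The only point worth stating cleanly is that, although $p_x^\alpha$ is defined for all $\alpha,x\in\R_+^d$, it is evaluated solely at equilibrium points in both hypotheses, so the translation to the conditions $\tilde f'(\cdot)\lessgtr 1$ is unambiguous.
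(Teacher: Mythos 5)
Your proposal is correct and follows essentially the same route as the paper: the paper likewise reduces the lemma to properties \emph{(iv)} and \emph{(v)} of Proposition \ref{prop gamma-half} applied to $\tilde f=\alpha_d f_d\circ\cdots\circ\alpha_1 f_1$, using the strict $\gamma^{1/2}$-convexity of $\tilde f$ (from the first two properties of the proposition) together with the identity $\tilde f'(\bar x_d)=\prod_{i=1}^d \alpha_i f_i'(\bar x_{i-1})=p_{\bar x}^\alpha$. The only content of your write-up beyond the paper's is making the equilibrium/fixed-point bijection and the derivative identity fully explicit, which is a faithful elaboration rather than a different argument.
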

Hence, the value of $p^\alpha_{\bar x}$ characterises the number of fixed points the system has. In the following section, we show that it also determines the dimension of the basin of attraction of $\bar x$.  
%\begin{align*}
%G: \R_+^d &\to \R_+\\
%x=(x_1,..., x_d) & \mapsto  \prod\limits_{i=1}^d{\frac{x_if'(x_i)}{f(x_i)}}. 
%\end{align*}

\section{Even number of decreasing functions}
\label{Even number of decreasing functions}
In the case where $n$ is even,  one easily checks that system \eqref{ODE dD} is an \textit{irreducible type K monotone system} in the sense defined in \cite{smith1988systems}. 
%As developed in this article, this property allows  to apply a series of results developed by Hirsh for cooperative a
As seen in the previous section, this system has a finite  number of equilibrium points (at most three), and a compact attractor set: thus, we can apply Theorem 2.5 and Theorem 2.6 of \cite{smith1988systems} which prove that the union of the basins of attraction of the equilibrium points is dense, and that the complement of this set has Lebesgue measure zero. 

%. Thus, this strong result proves that almost all the trajectories converges to one of these points.
 In this section, we complete this result in two ways:
\begin{itemize}
\item We determine, for given functions $f_1,..., f_d$, a set of parameters $A_{\textrm{bis}}$ such that system \eqref{ODE dD} is bistable (\textit{i.e.} has exactly two asymptotically stable equilibrium points) if $\alpha\in A_{\textrm{bis}}$, and monostable (\textit{i.e.} has exactly one asymptotically stable equilibrium point) if $\alpha \in \overline{A_{\textrm{bis}}}^C$. 
\item We determine a set $A_{\textrm{per}}\subset  A_{\textrm{bis}}$ such that system \eqref{ODE dD} has some periodic solutions if $\alpha \in A_{\textrm{per}}$.
\end{itemize}
Note that this last point does not mean that periodic solutions do not exist when $\alpha \notin A_{per}$, and that, in all cases, the set of initial conditions for which the solution converges to a periodic solution has Lebesgue measure zero (as a corollary of  \cite{smith1988systems}).

In order to prove these two points, we determine the dimension of the basin of attraction of an equilibrium point $\bar x$, as a function of $p_{\bar x }^\alpha$. For any equilibrium point $\bar x$, we denote its basin of attraction $B_{\bar x}.$ Our reasoning is based on the stable manifold theorem (the proof of which can be found for instance in \cite{perko2013differential}),  that we recall:

\begin{theorem*}[(Stable manifold)]
Let $F\in \mathcal{C}^1(\R^d, \R^d)$ a vector field, and let $\bar x \in \R^d$ such that $F(\bar x)=0_{\R^d}$. If $\bar x$ is a hyperbolic equilibrium point, \textit{i.e.} if all the eigenvalues of $\textrm{Jac}\, F(\bar x)$ have a non-zero real part, then the basin of attraction of $\bar x$ is a manifold of dimension $m$, where $m$ is the number of eigenvalues of $\textrm{Jac}\, F(\bar x)$ with a negative real part. 
\end{theorem*}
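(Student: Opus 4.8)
The plan is to prove this by first establishing a local statement — the existence of a \emph{stable manifold} through the hyperbolic fixed point — and then globalising it by means of the flow. First I would translate $\bar x$ to the origin and write $F(x)=Ax+g(x)$ with $A=\textrm{Jac}\,F(\bar x)$, where $g\in\mathcal{C}^1$, $g(0)=0$ and $Dg(0)=0$. Hyperbolicity lets me split $\R^d=E^s\oplus E^u$ into the $A$-invariant stable and unstable generalised eigenspaces (eigenvalues of negative, resp.\ positive, real part), with $\dim E^s=m$ and $\dim E^u=d-m$; I denote by $P_s,P_u$ the associated spectral projections. On these subspaces one has the exponential dichotomy estimates $\|e^{At}P_s\|\le Ke^{-\alpha t}$ for $t\ge 0$ and $\|e^{At}P_u\|\le Ke^{\beta t}$ for $t\le 0$, for some constants $K\ge 1$ and $\alpha,\beta>0$.

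The core step is the construction of the local stable manifold by the Lyapunov--Perron method. A trajectory $u$ of $\dot u=Au+g(u)$ that decays as $t\to+\infty$ must satisfy the integral equation
\[ u(t)=e^{At}P_s\,a+\int_0^t e^{A(t-s)}P_s\,g(u(s))\,ds-\int_t^{+\infty} e^{A(t-s)}P_u\,g(u(s))\,ds, \]
where $a=P_s u(0)\in E^s$ is a free parameter. I would read the right-hand side as an operator $\mathcal{T}_a$ acting on the Banach space of continuous $u:[0,+\infty)\to\R^d$ with $\sup_{t\ge 0}e^{\eta t}\|u(t)\|<\infty$ for a fixed $0<\eta<\alpha$. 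Since $Dg(0)=0$, the Lipschitz constant of $g$ on a small ball can be made arbitrarily small, so the dichotomy estimates make $\mathcal{T}_a$ a contraction whenever $\|a\|$ is small. Evaluating the unique fixed point $u(\cdot;a)$ at $t=0$ gives $P_s u(0;a)=a$ and $P_u u(0;a)=h(a)$, where $h:B_{E^s}(0,r)\to E^u$ is $\mathcal{C}^1$ with $h(0)=0$ and $Dh(0)=0$; consequently $W^s_{\mathrm{loc}}=\{a+h(a):\|a\|<r\}$ is an $m$-dimensional $\mathcal{C}^1$ graph over $E^s$, tangent to $E^s$ at the origin, each of whose points generates a trajectory converging exponentially to the fixed point.

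Finally I would globalise and identify the basin of attraction. Writing $\phi^t$ for the flow of $F$, the set $W^s=\bigcup_{t\ge 0}\phi^{-t}(W^s_{\mathrm{loc}})$ is an injectively immersed $\mathcal{C}^1$ manifold of dimension $m$, since each $\phi^{-t}$ is a diffeomorphism and the pieces are nested. It then remains to check that $W^s$ coincides with $B_{\bar x}$, the set of initial conditions whose forward trajectory converges to $\bar x$. The inclusion $W^s\subset B_{\bar x}$ follows from exponential convergence on $W^s_{\mathrm{loc}}$ together with invariance; for the reverse inclusion, any trajectory converging to $\bar x$ eventually enters and stays in the neighbourhood on which $W^s_{\mathrm{loc}}$ is exactly the set of forward-convergent points, so from that time on it lies on $W^s_{\mathrm{loc}}$ and hence its initial condition belongs to $W^s$. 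This yields $B_{\bar x}=W^s$, a manifold of dimension $m$, and in particular recovers the case $m=d$ of a sink, where $W^s$ is an open set. I expect the main obstacle to be the construction step — verifying the contraction but, above all, the $\mathcal{C}^1$-regularity and the tangency of $h$, which require differentiating the fixed-point relation and controlling the dependence on the parameter $a$ uniformly in $t$; once the local manifold is secured, the globalisation and the identification $B_{\bar x}=W^s$ are comparatively routine.
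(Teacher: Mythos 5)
Your proposal is correct, but note that the paper offers no proof of this statement at all: it is quoted as a known theorem with a pointer to Perko's textbook, and the Lyapunov--Perron construction you give (integral equation, contraction in an exponentially weighted space, $\mathcal{C}^1$ graph $h$ over $E^s$, then globalisation via $W^s=\bigcup_{t\ge 0}\phi^{-t}(W^s_{\mathrm{loc}})$ and the identification $B_{\bar x}=W^s$) is essentially the standard argument of that reference. So this is the same approach as the paper's implicit source, carried out correctly, including the one point the paper actually uses beyond the local statement --- that the \emph{global} stable set is an ($m$-dimensional, injectively immersed) manifold equal to the set of initial data converging to $\bar x$.
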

In order to apply this theorem, we need to compute the eigenvalues of the Jacobian matrix associated to system \eqref{ODE dD} which writes, at a given point $x\in \R^d$

%{\color{red} ajouter bar partour}

\begin{align*}
M_x^\alpha=\begin{pmatrix}
-1 & 0 & \cdots & 0 &  \alpha_1 f_1'( x_d)\\
\alpha_2 f_2'(x_1) & -1 & 0 & \cdots &0\\
0 & \ddots & \ddots & \ddots & \vdots\\
\vdots & \ddots &\ddots &\ddots & 0\\ 
0 & \cdots & 0 &\alpha_d f_d'(x_{d-1}) & -1
\end{pmatrix}. 
\end{align*}

Thus, the characteristic polynomial of $M_{x}^\alpha$ is easily computed to be 

\begin{comment}
\begin{align*}
P_x^\alpha(\lambda)&= 
\begin{vmatrix}
-1-\lambda & 0 & \cdots & 0 &  \alpha_1 f_1'( x_d)\\
\alpha_2 f_2'(x_1) & -1-\lambda & 0 & \cdots &0\\
0 & \ddots & \ddots & \ddots & \vdots\\
\vdots & \ddots &\ddots &\ddots & 0\\ 
0 & \cdots & 0 &\alpha_d f_d'(x_{d-1}) & -1-\lambda
\end{vmatrix}\\
=&(-1-\lambda) \begin{vmatrix}
 -1-\lambda & 0 & \cdots &0\\
\alpha_2f_2'(x_1) & \ddots & \ddots & \vdots\\
0 &\ddots &\ddots & 0\\ 
0 & 0 &\alpha_d f_d'(x_{d-1}) & -1-\lambda
\end{vmatrix} 
-(-1)^d \alpha_1 f_1'(x_d) \begin{vmatrix}
\alpha_2 f_2'(x_1) & -1-\lambda & 0 & 0\\
0 & \ddots & \ddots & \vdots \\
\vdots & \ddots &\ddots &-1-\lambda\\ 
0 & \cdots & 0 &\alpha_d f_d'(x_{d-1})
\end{vmatrix} \\
&=(-1)^d \left((\lambda+1)^d-\underbrace{\prod\limits_{i=1}^d{\alpha_i f_i'(x_{i-1})}}_{:=p_x^\alpha} \right), 
\end{align*}
(by developing the first determinant regarding the first line and noting that the the resulting two determinant are triangular). 

\end{comment}
\[(-1)^d \left((\lambda+1)^d-\prod\limits_{i=1}^d{\alpha_i f_i'(x_{i-1})} \right)=(-1)^d \left((\lambda+1)^d-p_x^\alpha\right) \]
Since $n$ is even,  $p_x^\alpha>0$ and hence the spectrum of $M_x^\alpha$ is given by

\[\mathrm{Sp}\left(M_x^\alpha\right)=\left\{(p_x^\alpha)^{1/d}e^{2k \pi i/d}-1, k \in \{0,..., d-1\}\right\}. \]

We deduce that 
\begin{enumerate}[(i)]
\item If $p_x^\alpha<1$, then all the eigenvalues of $M_x^\alpha$ have a negative real part.
\item If $d\in \{3,4\}$ and $p_x^\alpha>1$, or if $d\geq 5$ and $p_x^\alpha\in \left(1, \frac{1}{\cos\left( \frac{2\pi}{d}\right)^d}\right)$, then $M_x^\alpha$ has exactly $d-1$ eigenvalues with a negative real part, and one with a positive real part. 
\item If $d\geq 5$, and if $p_x^\alpha>\frac{1}{\cos\left( \frac{2\pi}{d}\right)^d}$, then $M_x^\alpha$ has at most $d-3$ eigenvalues with a negative real part. Moreover, if $d\leq 8$, or $d\in \llbracket 4j+1,4j+4 \rrbracket$ ($j\in \mathbb{N}\backslash \{0,1\}$), and for any $k\in \{2,..., j\}$, $p_{x}^\alpha\neq \frac{1}{\cos\left( \frac{2\pi k}{d}\right)^d}$, then all eigenvalues of $M_x^\alpha$ have a non-zero real part (where for all $a,b\in \R$, $a<b$, $ \llbracket a,b \rrbracket=[a,b]\cap \mathbb{N}$.)
\end{enumerate}

Thus, the stable manifold theorem yields
\begin{lem}
Let us assume that $n$ is even, and let $\bar x\in \R^d$ be an equilibrium point of \eqref{ODE dD}.
\begin{enumerate}[(i)]
\item If $p_{\bar x}^\alpha<1$, then $\mathrm{dim}\left(B_{\bar x}\right)=d$, \textit{i.e.} $B_{\bar x}$ is an open set.
\item If $d\in \{3,4\}$ and $p_{\bar x}^\alpha>1$, or if $d\geq 5$ and $p_{\bar x}^\alpha\in \left(1, \frac{1}{\cos\left( \frac{2\pi}{d}\right)^d}\right)$, then $\mathrm{dim}\left(B_{\bar x}\right)=d-1$.
\item If $d\geq 5$,   $p_{\bar x}^\alpha>\frac{1}{\cos\left( \frac{2\pi}{d}\right)^d}$, and $p_{\bar x}^\alpha \notin S_d$, with 
\begin{align*}
S_d:=\begin{cases}
\quad \emptyset \quad &\textrm{if} \quad d\in  \{5,6,7,8\}\\
\left\{ \frac{1}{\cos\left( \frac{2\pi k}{d}\right)^d}, k\in \{2,... j\} \right\} \quad &\textrm{if}\quad  d\in \llbracket 4j+1,4j+4 \rrbracket, \quad  j\geq 2
\end{cases}, 
\end{align*}
then $\mathrm{dim}\left(B_{\bar x}\right)\leq d-3$.
\end{enumerate}
\label{lem stability}
\end{lem}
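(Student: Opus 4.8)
The plan is to read everything off the explicit spectrum computed above and then feed the eigenvalue counts into the stable manifold theorem; concretely, parts (i)--(iii) of the lemma are the translations, via that theorem, of the three eigenvalue-counting statements preceding it, so the real content is establishing those counts together with hyperbolicity. Writing $\rho := (p_{\bar x}^\alpha)^{1/d} > 0$ and $\lambda_k := \rho\, e^{2k\pi i/d} - 1$, one has $\mathrm{Re}(\lambda_k) = \rho\cos(2k\pi/d) - 1$, so the sign of every real part is decided by comparing $\cos(2k\pi/d)$ with $1/\rho$. For each regime I would check two things: that $\bar x$ is hyperbolic (no $k$ yields $\cos(2k\pi/d) = 1/\rho$), which is what licenses the stable manifold theorem, and the exact count $m = \#\{k : \cos(2k\pi/d) < 1/\rho\}$ of eigenvalues with negative real part, which the theorem then identifies with $\dim(B_{\bar x})$.

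For (i), $\rho < 1$ gives $\rho\cos(2k\pi/d) \leq \rho < 1$ for all $k$, so every eigenvalue has strictly negative real part; $\bar x$ is hyperbolic and $m = d$, whence $B_{\bar x}$ is open. For (ii), $\rho > 1$ forces $\mathrm{Re}(\lambda_0) = \rho - 1 > 0$, while among $k \neq 0$ the cosine is maximal at $k = 1, d-1$ with value $\cos(2\pi/d)$; thus all remaining eigenvalues have negative real part exactly when $\rho\cos(2\pi/d) < 1$. For $d \in \{3,4\}$ one has $\cos(2\pi/d) \leq 0$, so this holds automatically once $\rho > 1$, while for $d \geq 5$ it is equivalent to $p_{\bar x}^\alpha < 1/\cos(2\pi/d)^d$, the stated open interval. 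In both cases the strict inequalities give hyperbolicity and $m = d-1$.

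For (iii), $p_{\bar x}^\alpha > 1/\cos(2\pi/d)^d$ means $\rho\cos(2\pi/d) > 1$, so $\lambda_0$ and the conjugate pair $\lambda_1, \lambda_{d-1}$ all have positive real part; these are three distinct eigenvalues, yielding $m \leq d-3$. The main obstacle is hyperbolicity: I must exclude $\cos(2k\pi/d) = 1/\rho$ for every $k$. Since here $1/\rho \in (0, \cos(2\pi/d))$, and cosine is decreasing on $[0,\pi]$ (the values for $k > d/2$ duplicating those for $k < d/2$ via $\cos(2k\pi/d) = \cos(2(d-k)\pi/d)$), the only cosines in $(0,\cos(2\pi/d))$ are $\cos(2k\pi/d)$ with $1 < k < d/4$. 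Writing $d \in \llbracket 4j+1, 4j+4\rrbracket$, this is precisely $k \in \{2,\ldots,j\}$, so the forbidden values of $p_{\bar x}^\alpha$ are exactly $\{1/\cos(2\pi k/d)^d : k \in \{2,\ldots,j\}\} = S_d$. Excluding $p_{\bar x}^\alpha \in S_d$ thus secures hyperbolicity, and for $d \in \{5,6,7,8\}$ (where $j = 1$) the index set is empty, matching $S_d = \emptyset$. With hyperbolicity in hand the stable manifold theorem gives $\dim(B_{\bar x}) = m \leq d-3$, completing the argument.
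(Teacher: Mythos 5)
Your proposal is correct and follows essentially the same route as the paper: reading the spectrum $\left\{(p_{\bar x}^\alpha)^{1/d}e^{2k\pi i/d}-1\right\}$ off the circulant-type Jacobian, counting eigenvalues with negative real part in each regime, and invoking the stable manifold theorem, with the set $S_d$ arising exactly as in the paper from the hyperbolicity requirement $\cos(2k\pi/d)\neq 1/\rho$ for $2\leq k<d/4$. You merely make explicit the counting and hyperbolicity checks that the paper states without detail in the three numbered deductions preceding the lemma.
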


%Thus, according to the stable manifold theorem {\color{red} add reference} if $\bar x$ is an equilibria of 

%Thus, if $\bar x$ is an equilibrium point of \eqref{ODE dD}, then:
%\begin{enumerate}[(i)]
%\item If $p_{\bar x}^\alpha<1$, then $\mathrm{dim}\left(B_{\bar x}\right)=d$. 
%\item If $p_{\bar x}^\alpha \in \left(1, \frac{1}{\cos\left( \frac{2\pi}{d}\right)^d}\right)$, then $\mathrm{dim}\left(B_{\bar x}\right)=d-1$. 
%\item If $p_{\bar x}^\alpha> \frac{1}{\cos\left( \frac{2\pi}{d}\right)^d}$, and {\color{red}(H)} is satisfied, then $\mathrm{dim}\left(B_{\bar x}\right)\leq d-3$. 
%\end{enumerate}

%{\color{red} (H)}: Let $\alpha=(\alpha_1,..., \alpha_d)\in (\R_+^*)^d$. We say that $\alpha$ satisfies hypothesis {\color{red} (H)} if $d\leq 8$ or if $d\in [4*j+1,4*j+4 ]$ ($j\in \mathbb{N}\backslash \{0,1\}$), and for any $k\in \{2,... j\}$, $p_{\bar x}^\alpha\neq \frac{1}{\cos\left( \frac{2\pi k}{d}\right)^d}$. 

%++ add n odd + d=3,4. 
Moreover, we easily check that, in all cases, $M_{\bar x}^\alpha$ has an odd number of eigenvalues with a positive real part. Thus, since $n$ is even, we get from the main theorem of \cite{mallet1990poincare} that any solution converges to an equilibrium point or to a periodic orbit. 

We will now use Lemmas \ref{lem stability} and \ref{lem dimension unstable} to prove Theorem \ref{Theorem intro} in a more precise form which specifies the sets $A_{\mathrm{bis}}$ and $A_{\mathrm{per}}$, in the case where $n$ is even. Before stating it, we give a last lemma linking the dimension of the basin of attraction of the unstable equilibrium point to the existence of divergent solutions (which thus converge to periodic solutions), in the bistable case.

\begin{lem}
Let `$\dot x =F(x)$' be an  ODE which has exactly three equilibrium points, (denoted $\bar x$, $\bar y$, $\bar z$) assumed hyperbolic, and let us assume that for all initial condition $x_0\in \R^d$, the solution of this ODE is defined on $\R_+$ and converges. If $\bar y$, $\bar z$ are asymptotically stable, then $\mathrm{dim}\left(B_{\bar x}\right)=d-1.$
\label{lem dimension unstable}
\end{lem}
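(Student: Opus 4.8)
The plan is to convert the convergence hypothesis into a covering of $\R^d$ by the three basins, and then read off $\dim(B_{\bar x})$ from a topological separation argument. First I would note that, since $F$ is continuous and every solution converges, each limit is necessarily an equilibrium (for $x(t)\to\ell$ forces $F(\ell)=0$), so the only possible limits are $\bar x,\bar y,\bar z$; as a solution has a unique limit, this gives a disjoint partition
\[\R^d=B_{\bar x}\sqcup B_{\bar y}\sqcup B_{\bar z}.\]
Because $\bar y$ and $\bar z$ are asymptotically stable, $B_{\bar y}$ and $B_{\bar z}$ are open and nonempty, so $B_{\bar x}=\R^d\setminus(B_{\bar y}\cup B_{\bar z})$ is \emph{closed}. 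By the stable manifold theorem recalled above, $B_{\bar x}$ is a manifold whose dimension $m$ equals the number of eigenvalues of $\mathrm{Jac}\,F(\bar x)$ with negative real part; with its subspace topology it is therefore a closed subset of $\R^d$ of topological dimension $m$.

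Next I would pin down $m$ from both sides using the two remaining basins. On one hand, $B_{\bar x}\neq\R^d$ (for instance $\bar y\notin B_{\bar x}$) and $B_{\bar x}$ is closed; since a $d$-dimensional submanifold of $\R^d$ is open, if we had $m=d$ then $B_{\bar x}$ would be a proper nonempty clopen subset of the connected space $\R^d$, which is impossible. Hence $m\le d-1$. On the other hand, $\R^d\setminus B_{\bar x}=B_{\bar y}\cup B_{\bar z}$ is a disjoint union of two nonempty open sets, hence disconnected, so $B_{\bar x}$ \emph{separates} $\R^d$. The key tool is the classical dimension-theoretic fact that a closed subset $C\subset\R^d$ with $\dim C\le d-2$ cannot separate $\R^d$ (equivalently, a closed submanifold of codimension $\ge 2$ does not separate, which can also be seen by transversality, a generic path joining two points missing it). Applied to $C=B_{\bar x}$, this forces $m\ge d-1$. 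Combining the two bounds yields $m=d-1$, that is $\dim(B_{\bar x})=d-1$.

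The step I expect to be the main obstacle is the rigorous application of the separation theorem, since it relies on knowing that $B_{\bar x}$, viewed as a subset of $\R^d$ with the subspace topology, is genuinely an embedded closed $m$-submanifold (so that its topological dimension is exactly $m$, and so that the separation/transversality statement applies verbatim). It is precisely the convergence hypothesis that rules out the pathologies here: there is no recurrence and no homoclinic accumulation, so the global stable manifold of $\bar x$ is embedded rather than merely immersed, and its closedness comes for free from the partition above. Once this embeddedness is secured, the two-sided dimension estimate closes the argument immediately.
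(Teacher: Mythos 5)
Your proof is correct and follows essentially the same route as the paper: the paper likewise partitions the space into the three basins (using convergence of all solutions), notes via the stable manifold theorem that $B_{\bar y}$, $B_{\bar z}$ are open and $B_{\bar x}$ is a manifold, and concludes that since $B_{\bar x}$ separates a connected space, the dimension-theoretic separation theorem (Corollary 1 of Theorem IV~4 in Hurewicz--Wallman) forces $\mathrm{dim}\left(B_{\bar x}\right)=d-1$. Your explicit two-sided bound ($m\le d-1$ by a clopen argument, $m\ge d-1$ by the separation theorem) and your caveat about embeddedness merely spell out details the paper leaves implicit.
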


\begin{proof}
First, let us note that, according to the stable manifold theorem, $B_{\bar y}$ and  $B_{\bar z}$ are two open sets, and that $B_{\bar x}$ is a manifold. Since, by hypothesis, all the solutions converge, $\R_+^d=B_{\bar x}\cup B_{\bar y} \cup B_{\bar z}$, which means that $B_{\bar x}$ separates $\R_+^d$ in two open sets. As shown in \cite{hurewicz2015dimension} (Corollary 1 of Theorem IV 4), and by the connectedness of $\R_+^d$, this is possible only if $\mathrm{dim}\left(B_{\bar x}\right)=d-1$. 
\end{proof}

%Let us introduce the following functions and sets, required in our theorem: 
Before stating our theorem, let us introduce the functions $\Gamma$ and $G$, defined for any $x\in \R_+^d$ by $\Gamma(x)=\left( \frac{x_1}{f_1(x_d)}, \frac{x_2}{f_2(x_1)}, ..., \frac{x_d}{f_d(x_{d-1})} \right)$ and $G(x)=\prod\limits_{i=1}^d{\frac{x_if'(x_i)}{f(x_i)}}$, and the sets $E_{\mathrm{bis}}=\left\{ x\in \R_+^d: G(x) > 1 \right\}$ and if $d\geq 5$,  $E_{\textrm{per}}=\left\{ x\in \R_+^d: G(x)>\frac{1}{\cos \left(\frac{2 \pi }{d}\right)^d}, G(x)\notin S_{d}\right\}$. Note that $E_{\textrm{per}}\subset E_{\textrm{bis}}$. 

%\begin{align*}
%\Gamma: \R_+^d &\to \R_+^d\\
%x=(x_1,..., x_d) &\mapsto \left( \frac{x_1}{f_1(x_d)}, \frac{x_2}{f_2(x_1)}, ..., \frac{x_d}{f_d(x_{d-1})} \right), 
%\end{align*}

%\begin{align*}
%G: \R_+^d &\to \R\\
%x=(x_1,..., x_d) & \mapsto  \prod\limits_{i=1}^d{\frac{x_if'(x_i)}{f(x_i)}}. 
%\end{align*}

%\begin{align*}
%E_{\mathrm{bis}}&=\left\{ x\in \R_+^d: G(x) > 1 \right\}\\
%\end{align*}
%If $d\geq 5$, we introduce 
%\[E_{\textrm{per}}=\left\{ x\in \R_+^d: G(x)>\frac{1}{\cos \left(\frac{2 \pi }{d}\right)^d}, G(x)\notin S_{d}\right\}, \]
%where 
%$S_{non-hyp}=\emptyset$ if $d\leq 8$, $S_{non-hyp}=\left\{ \frac{1}{\cos \left(\frac{2\pi k}{d}\right)^d}, k\in \{2,..., j\} \right\}$ if $d\in \{4j+1,...,  4j+4\}$ for $j\in \mathbb{N}\backslash \{0,1\}$. 

\begin{theorem}
Let us assume that $n$ is even.
\begin{enumerate}[(i)]
\item If $\alpha\in \overline{\Gamma(E_{\mathrm{bis}})}^C$, then \eqref{ODE dD} has a unique equilibrium point which is globally asymptotically stable. 
\item If $\alpha\in \Gamma(E_{\mathrm{bis}})$, then \eqref{ODE dD} has exactly three equilibrium points, among which two are asymptotically stable and one is asymptotically unstable. Moreover, the union of the basins of attraction of the two stable equilibria is a dense open subset of $\R_+^d$.
\item If $d\geq 5$, and if $\alpha\in \Gamma(E_{\mathrm{per}})$, then there exist periodic solutions of \eqref{ODE dD}. 
\end{enumerate}
\end{theorem}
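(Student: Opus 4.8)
The strategy is to translate the abstract conditions on $\alpha$ into concrete statements about the sign of $p_{\bar x}^\alpha-1$ at the equilibria, and then feed these into Lemmas \ref{lem characterisation fixed points}, \ref{lem stability}, and \ref{lem dimension unstable}. The key observation linking $\alpha$ to the set $E_{\mathrm{bis}}$ is the following. An equilibrium $\bar x$ of \eqref{ODE dD} with parameter $\alpha$ satisfies $\alpha_i=\bar x_i/f_i(\bar x_{i-1})$ for every $i$, which is precisely $\alpha=\Gamma(\bar x)$. Moreover, a direct computation gives $G(\bar x)=\prod_i \frac{\bar x_i f_i'(\bar x_{i-1})}{f_i(\bar x_{i-1})}=\prod_i \alpha_i f_i'(\bar x_{i-1})=p_{\bar x}^\alpha$, so the value of $G$ at the equilibrium coincides with $p_{\bar x}^\alpha$. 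Thus $\alpha\in\Gamma(E_{\mathrm{bis}})$ means precisely that there exists an equilibrium $\bar x$ (namely a preimage of $\alpha$ under $\Gamma$) at which $p_{\bar x}^\alpha>1$; conversely $\alpha\notin \overline{\Gamma(E_{\mathrm{bis}})}$ forces $p_{\bar x}^\alpha<1$ (in fact $<$ even at the boundary exclusion) at every equilibrium.

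With this dictionary in hand, the three cases are routine. For \emph{(i)}, $\alpha\in\overline{\Gamma(E_{\mathrm{bis}})}^C$ gives $p_{\bar x}^\alpha<1$ at every equilibrium, so Lemma \ref{lem characterisation fixed points} yields a unique equilibrium; Lemma \ref{lem stability}\emph{(i)} then shows its basin has dimension $d$, hence is open, and since all solutions converge to an equilibrium or periodic orbit (by the Mallet-Paret--Smith result invoked after Lemma \ref{lem stability}) and there is only one equilibrium whose basin is all of $\R_+^d$ up to measure zero, global asymptotic stability follows. For \emph{(ii)}, picking $\bar x=\Gamma^{-1}(\alpha)\in E_{\mathrm{bis}}$ gives $p_{\bar x}^\alpha=G(\bar x)>1$, so Lemma \ref{lem characterisation fixed points} produces exactly three equilibria $\bar x,\bar y,\bar z$ with $p_{\bar y}^\alpha,p_{\bar z}^\alpha<1$; by Lemma \ref{lem stability}\emph{(i)} the latter two have open basins (asymptotic stability), and the density/measure-zero statement for the union of their basins is exactly Theorems 2.5--2.6 of \cite{smith1988systems} quoted at the start of the section. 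For \emph{(iii)}, $\alpha\in\Gamma(E_{\mathrm{per}})$ gives an equilibrium with $p_{\bar x}^\alpha>\frac{1}{\cos(2\pi/d)^d}$ and $p_{\bar x}^\alpha\notin S_d$; by Lemma \ref{lem stability}\emph{(iii)} its basin has dimension at most $d-3$, whereas Lemma \ref{lem dimension unstable} (applicable since we are in the three-equilibria case with two stable sinks and all solutions converging) would force the unstable manifold of $\bar x$ to have dimension exactly $d-1$ if all solutions converged to an equilibrium. The contradiction between $\dim B_{\bar x}\le d-3$ and the $d-1$ demanded by Lemma \ref{lem dimension unstable} shows that not all solutions can converge to equilibria, so some solution converges to a periodic orbit, yielding the existence of periodic solutions.

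The main obstacle I expect is \emph{(iii)}: one must be careful that Lemma \ref{lem dimension unstable} is stated under the hypothesis that \emph{all} solutions converge to an equilibrium, so the logic is a proof by contradiction—assuming no periodic orbit exists makes all solutions converge (by Mallet-Paret--Smith), which then forces $\dim B_{\bar x}=d-1$ and contradicts $\dim B_{\bar x}\le d-3$ from the eigenvalue count. The delicate point is verifying hyperbolicity of $\bar x$ (needed to apply the stable manifold theorem and to make ``$\dim B_{\bar x}$'' well defined), which is exactly why $E_{\mathrm{per}}$ excludes the exceptional set $S_d$: those excluded values are the parameters where $M_{\bar x}^\alpha$ acquires a purely imaginary eigenvalue pair, breaking hyperbolicity. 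A secondary subtlety is checking that $\Gamma$ is the correct inverse correspondence—that every $\alpha\in\Gamma(E)$ genuinely arises from an equilibrium lying in $E$—which follows from the equivalence $\alpha=\Gamma(\bar x)\iff \bar x$ is an equilibrium, but deserves an explicit line to avoid confusing $\Gamma$-preimages with arbitrary points of $\R_+^d$.
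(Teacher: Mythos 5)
Your route is essentially the paper's: the same dictionary ($\alpha=\Gamma(\bar x)$ if and only if $\bar x$ is an equilibrium, and $G(\bar x)=p^{\Gamma(\bar x)}_{\bar x}$), the same use of Lemmas \ref{lem characterisation fixed points} and \ref{lem stability} for parts \emph{(i)} and \emph{(ii)}, and for part \emph{(iii)} exactly the paper's argument: the contrapositive of Lemma \ref{lem dimension unstable} (if all solutions converged, the unstable basin would have dimension $d-1$, contradicting $\dim B_{\bar x}\leq d-3$), followed by \cite{mallet1990poincare} to conclude that a non-convergent solution must approach a periodic orbit. Your observations about hyperbolicity and the role of the excluded set $S_d$ also match the paper's setup, and your parenthetical worry about $\Gamma$-preimages is resolved the same way the paper resolves it (via $\Gamma(\overline{E_{\mathrm{bis}}})\subset\overline{\Gamma(E_{\mathrm{bis}})}$, which follows from continuity of $\Gamma$).

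There is, however, one genuine gap, in part \emph{(i)}: you deduce \emph{global} asymptotic stability from ``the basin is all of $\R_+^d$ up to measure zero'' together with the Mallet-Paret--Smith alternative. That inference is invalid: Mallet-Paret--Smith permits solutions converging to periodic orbits, and a dense open basin with measure-zero complement leaves precisely the room needed for such orbits --- indeed, part \emph{(iii)} of this very theorem shows that for $d\geq 5$ this system can have periodic orbits whose basin has measure zero, so ``a.e.\ convergence to the equilibrium'' cannot by itself be upgraded to ``all solutions converge''. The paper closes this gap differently: since $n$ is even, \eqref{ODE dD} is an irreducible type K monotone system, and for such systems (with a compact attractor) a unique equilibrium is automatically globally asymptotically stable \cite{smith1988systems}. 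You need this monotone-systems result, or some other argument excluding nontrivial limit sets when $p^\alpha_{\bar x}<1$ at the unique equilibrium, to obtain the stated conclusion; the rest of your proof stands as written.
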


\begin{proof}
First, let us note that, according to the definitions of $\Gamma$ and $G$, $\bar x\in \R_+^d$ is a fixed point of system \eqref{ODE dD} if and only if $\alpha=\Gamma(\bar x)$, and that for any equilibrium point $\bar x$ of \eqref{ODE dD}, 

\[p_{\bar x}^\alpha=p_{\bar x}^{\Gamma(\bar x)}=G(\bar x).\] 
\begin{enumerate}[(i)]
\item Let us assume that  $\alpha \in \overline{\Gamma(E_{\mathrm{bis}})}^C$, and let $\bar x$ be a fixed point of \eqref{ODE dD}. Since $\Gamma(\overline{E_{\mathrm{bis}}}) \subset \overline{\Gamma(E_{\mathrm{bis}})} $, and $\alpha=\Gamma(\bar x)$, $\bar x \in \overline{\Gamma(E_{\mathrm{bis}})}^C$, which means, by definition of $E_{\mathrm{bis}}$, that $p^\alpha_{\bar x}=G(\bar x)<1$. Since this equality holds for any equilibrium point, we conclude by Lemma \ref{lem characterisation fixed points}, that system \eqref{ODE dD} has a unique equilibrium point, which is asymptotically stable, by Lemma \ref{lem stability}. Since \eqref{ODE dD} is an irreducible type K monotone system, this unique equilibrium is in fact globally asymptotically stable \cite{smith1988systems}.
\item  Let us assume that  $\alpha \in \Gamma (E_{\textrm{bis}})$. Then, there exists $\bar x \in E_{\textrm{bis}}$ such that $\alpha=\Gamma(\bar x)$. The point $\bar x$ is thus an equilibrium point     of \eqref{ODE dD} which satisfies $p_{\bar x}^\alpha=G(\bar x)>1$. Therefore, Lemmas \ref{lem characterisation fixed points},  \ref{lem stability} and the result of \cite{smith1988systems} mentioned at the beginning of the section yield the result. 

%according to Proposition \ref{prop gamma-half}, $\tilde{f}$ has exactly three fixed points, $\bar x$, $\bar y$, and $\bar z$ such that $p_{\bar x}^\alpha=\tilde{f}'(\bar x_d)>1$, $p_{\bar y}^\alpha=\tilde{f}'(\bar y_d)>1$ and $p_{\bar z}^\alpha=\tilde{f}'(\bar z_d)>1$, which ensures that exactly two of these points are asymptotically stable. 

\item Let us assume that  $\alpha \in \Gamma (E_{\textrm{per}})$. Since $E_{\textrm{per}}\subset E_{\textrm{bis}}$, \eqref{ODE dD} has exactly three equilibrium points, denoted $\bar x$, $\bar y$ and $\bar z$, with $\bar x \in E_{\textrm{per}}$, and $\bar y$ and $\bar z$ which are asymptotically stable. By definition of $E_{\textrm{per}}$, $\textrm{dim}\left(B_{\bar x}\right)\leq d-3$. By the contrapositive of Lemma \ref{lem dimension unstable}, system \eqref{ODE dD} has some divergent solutions, which thus converge to a periodic orbit, according to \cite{mallet1990poincare}. 
\end{enumerate}
\end{proof}

\begin{rem}
As mentioned above, this theorem is a more precise version of Theorem \ref{Theorem intro}: we find the statement of the latter by defining $A_{\textrm{bis}}=\Gamma\left(E_{\textrm{bis}} \right)$ and $A_{\textrm{per}}=\Gamma\left( E_{\textrm{per}} \right)$, and by noting that $E_{\textrm{bis}}$ (resp. $E_{\textrm{per}}$) is empty if and only if $D\leq 1$ (resp. $D\leq \frac{1}{\cos \left( \frac{2\pi}{d} \right)^d}$). 
\end{rem}
%+énoncer théorème pour $n$ impair en expliquant que + simple et 

\section{Odd number of decreasing functions}
\label{Odd number of decreasing functions}
We now deal with the case where $n$ is odd. This case is simpler, since the system has a unique equilibrium point under this hypothesis. Nevertheless, we make  weaker conclusions regarding the global behaviour of the system, since it is not an irreducible type K monotone system (see \cite{smith1988systems}). Thus, we simply study the linearised system at the neighbourhood of the equilibrium point, and we conclude with \cite{mallet1990poincare}, which guarantees that the solutions either converge to this equilibrium point, or to a periodic orbit.

Let us denote
$E_{unst}:=\left\{ x\in \R^d: G(x)<-\frac{1}{\cos \left( \frac{\pi}{d} \right)^d} \right\}. $

%if $n$is odd. 

We get the following result: 
\begin{theorem}
Let us assume that $n$ is odd, and that $d\geq 3$. Then, system \eqref{ODE dD} has a unique equilibrium point. Moreover, 
\begin{enumerate}[(i)]
\item If $\alpha \in \overline{\Gamma \left(E_{\textrm{unst}}\right)}^C$, then this equilibrium point is asymptotically stable. Moreover, all the solutions of \eqref{ODE dD} either converge to this point or to a periodic orbit. 
\item If $\alpha\in \Gamma(E_{\textrm{unst}})$, then this equilibrium point is asymptotically unstable, Moreover, the set of initial conditions for which the solution converges to a periodic solution is a dense open subset of $(\R_+^*)^d$, and its complement, which is the set of initial conditions for which the solution converges to the equilibrium point, has Lebesgue measure zero.  
\end{enumerate}
\end{theorem}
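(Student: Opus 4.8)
The plan is to follow the blueprint of the even case, substituting the Poincaré–Bendixson theorem of Mallet-Paret and Smith \cite{mallet1990poincare} for the monotone-systems input and analysing the linearisation by hand. First I would dispatch uniqueness: when $n$ is odd, at every equilibrium $\bar x$ an odd number of the factors $f_i'(\bar x_{i-1})$ are negative and the $\alpha_i$ are positive, so $p_{\bar x}^\alpha=\prod_{i}\alpha_i f_i'(\bar x_{i-1})<0<1$; the first bullet of Lemma \ref{lem characterisation fixed points} then yields exactly one equilibrium point. I would then recycle the characteristic polynomial $(-1)^d\big((\lambda+1)^d-p_{\bar x}^\alpha\big)$ from Section \ref{Even number of decreasing functions}: writing $p:=p_{\bar x}^\alpha<0$, its roots are $\lambda_k=|p|^{1/d}e^{i\pi(2k+1)/d}-1$ for $k\in\{0,\dots,d-1\}$, whence $\mathrm{Re}(\lambda_k)=|p|^{1/d}\cos\!\big(\tfrac{\pi(2k+1)}{d}\big)-1$. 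Because $d\geq 3$, the odd multiple of $\pi/d$ closest to $0$ is $\pi/d\in(0,\tfrac{\pi}{2})$, so the largest real part is $|p|^{1/d}\cos(\pi/d)-1$; consequently all eigenvalues have negative real part exactly when $|p|<\cos(\pi/d)^{-d}$, and at least one has positive real part exactly when $|p|>\cos(\pi/d)^{-d}$, i.e. when $p<-\cos(\pi/d)^{-d}$. This is precisely the inequality defining $E_{\textrm{unst}}$.

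Next I would transfer these conditions from $\alpha$ to $\bar x$ exactly as in the even case, using $\alpha=\Gamma(\bar x)$, the identity $p_{\bar x}^\alpha=G(\bar x)$ at the unique equilibrium, and the inclusion $\Gamma(\overline{E_{\textrm{unst}}})\subset\overline{\Gamma(E_{\textrm{unst}})}$. For (i), $\alpha\in\overline{\Gamma(E_{\textrm{unst}})}^C$ forces $\bar x\notin\overline{E_{\textrm{unst}}}$, hence $G(\bar x)>-\cos(\pi/d)^{-d}$, so by the eigenvalue count $\bar x$ is asymptotically stable; since \eqref{ODE dD} is a cyclic feedback system, \cite{mallet1990poincare} guarantees that every $\omega$-limit set is either the equilibrium or a periodic orbit, which is the stated dichotomy. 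For (ii), $\alpha\in\Gamma(E_{\textrm{unst}})$ gives an equilibrium with $G(\bar x)<-\cos(\pi/d)^{-d}$, hence at least one eigenvalue of positive real part, so $\bar x$ is asymptotically unstable.

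It remains to describe the basins in case (ii). The set of initial conditions whose solution converges to $\bar x$ is contained in its (centre-)stable set; by the stable manifold theorem—or its centre-stable refinement at the measure-zero set of parameters where some $\lambda_k$ is purely imaginary—this set is a $C^1$ immersed submanifold of dimension at most $d-1$, because at least one eigenvalue has positive real part, and it therefore has empty interior and Lebesgue measure zero. By the dichotomy of \cite{mallet1990poincare}, every initial condition off this set produces a solution whose $\omega$-limit set contains no equilibrium and is thus a periodic orbit; hence the set $C$ of initial conditions converging to a periodic orbit is the complement of a measure-zero set, which makes it dense and of full measure in $(\R_+^*)^d$.

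The hard part will be the openness of $C$, equivalently the closedness of the set of initial conditions converging to $\bar x$, which the stable manifold theorem does not deliver on its own since a global stable manifold need not be embedded. I would attack it through the global structure rather than local linearisation: a sequence of points converging to $\bar x$ whose limit instead converged to a periodic orbit $\gamma\neq\{\bar x\}$ would have to cross the positive gap $\mathrm{dist}(\gamma,\bar x)>0$, and I would use the integer-valued Lyapunov functional of Mallet-Paret and Smith—non-increasing along the flow and taking distinct, separated values on $\gamma$ and on approach to $\bar x$—to rule this out by a semicontinuity argument. Turning this into a clean proof of closedness, and in passing obtaining the finiteness of the number of periodic orbits as in the even case, is where I expect the real difficulty to lie; everything else is bookkeeping on top of the two cited theorems.
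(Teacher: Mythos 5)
Your uniqueness argument (every equilibrium has $p_{\bar x}^\alpha<0<1$, then the first bullet of Lemma \ref{lem characterisation fixed points}) is a correct variant of the paper's, which instead notes directly that $\tilde{f}$ is decreasing with $\tilde{f}(0)>0$ and hence has a unique fixed point. Your spectral computation, the threshold $\lvert p_{\bar x}^\alpha\rvert \gtrless \frac{1}{\cos(\pi/d)^d}$, and the transfer via $\alpha=\Gamma(\bar x)$ and $p_{\bar x}^\alpha=G(\bar x)$ coincide with the paper, and part (i) is handled, as in the paper, by the main theorem of \cite{mallet1990poincare} (your observation that asymptotic stability excludes the homoclinic alternative is the right way to read that dichotomy).

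The genuine gap is in part (ii). The paper does not prove the global statements by hand: it invokes Theorem 4.3 of \cite{mallet1990poincare}, which, for a negative monotone cyclic feedback system with a unique linearly unstable equilibrium, directly delivers the structure of the dynamics away from the equilibrium, in particular that the set of initial conditions attracted to periodic orbits is open and dense with measure-zero complement. Your hand-rolled substitute breaks at two points. First, the step ``off the (centre-)stable set, the $\omega$-limit set contains no equilibrium and is thus a periodic orbit'' does not follow: failing to converge to $\bar x$ does not exclude $\bar x$ from the $\omega$-limit set, and the Poincaré--Bendixson alternative of \cite{mallet1990poincare} explicitly allows $\omega$-limit sets made of $\bar x$ together with homoclinic orbits. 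In case (i) you correctly rule these out by asymptotic stability, but in case (ii) $\bar x$ is unstable and nothing in your argument eliminates them, so even the identification of the measure-zero complement as ``the set converging to the equilibrium'' is unproved. Second, you concede that openness of the set attracted to periodic orbits is not established; the integer-valued Lyapunov functional you propose to deploy is essentially the machinery Mallet-Paret and Smith use to prove their Theorem 4.3, so carrying out your sketch amounts to reproving that theorem rather than citing it. In short, everything up to the linearised analysis matches the paper, but the topological heart of (ii) is exactly what Theorem 4.3 of \cite{mallet1990poincare} supplies, and your blind reconstruction of it has a real hole.
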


\begin{proof}
We use the same notations as in the previous section. Since $n$ is odd, $\tilde{f}$ is decreasing with $\tilde{f}(0)>0$, it has a unique fixed point, which implies that system \eqref{ODE dD} has a unique equilibrium point, that we denote $\bar x$. This point is asymptotically stable if all the eigenvalues of $M_{\bar x}^\alpha$ have a negative real part, and asymptotically unstable if at least one of these eigenvalues has a positive real part. Since $n$ is odd, 
\[\mathrm{Sp}\left(M_{\bar x}^\alpha\right)=\left\{(\lvert p_x^\alpha\rvert )^{1/d}e^{(2k+1)\pi i/d}-1, k \in \{0,..., d-1\}\right\},  \] 
which implies that $\bar x$ is stable if $\lvert p_{\bar x}^\alpha \rvert< \frac{1}{\cos \left( \frac{\pi}{d} \right)^d}$, and unstable if $\lvert p_{\bar x}^\alpha \rvert> \frac{1}{\cos \left( \frac{\pi}{d} \right)^d}$. We conclude by noting that $\lvert p_{\bar x}^{\alpha}\rvert =-p_{\bar x}^{\alpha}=- p_{\bar x}^{\Gamma(\bar x)} =-G(\bar x)$, and by applying the main theorem of \cite{mallet1990poincare} for the first point, and Theorem 4.3 of this same article for the second one. 

\end{proof}

We recover the result of Theorem \ref{Theorem intro} by defining $A_{\textrm{unst}}=\Gamma(E_{\textrm{unst}})$.

\section*{Acknowledgements}
The author thanks Nastassia Pouradier Duteil and Camille Pouchol for their proofreading and their guidance throughout the writing of this paper. 

\bibliographystyle{plain}
\bibliography{biblio_Note_ODE}

\end{document}